\documentclass[11pt]{amsart}
\usepackage{amssymb,amsmath,amsfonts}

\textwidth=16.000cm \textheight=22.000cm \topmargin=0.00cm
\oddsidemargin=0.00cm \evensidemargin=0.00cm \headheight=14.4pt
\headsep=1.2500cm \numberwithin{equation}{section}
\hyphenation{semi-stable} \emergencystretch=12pt

\newcommand{\set}[1]{\left\{#1\right\}}


\newtheorem{Theorem}{Theorem}[section]
\newtheorem{Proposition}[Theorem]{Proposition}
\newtheorem{cor}[Theorem]{Corollary}

\theoremstyle{remark}

\newtheorem{Example}[Theorem]{Example}

\newtheorem{Remark}[Theorem]{Remark}

\begin{document}
\title{On homological notions of Banach algebras related to a character}

   \author{Amir Sahami}
\address{Department of Mathematics, Shahid Rajaee Teacher Training University, Lavizan, Tehran 16788-15811, Iran.} \email{amir.sahami@aut.ac.ir}

\keywords{Beurling algebras, semigroup algebras,
$\phi$-biprojective, $\phi$-contractible, amenability.}

\subjclass[2010]{Primary 43A07, 43A20, Secondary 46H05.}

\maketitle

\begin{abstract}
In this paper, we countinue our work in \cite{11}.
 We show that $L^{1}(G,w)$ is $\phi_{0}$-biprojective if
and only if $G$ is compact, where $\phi_{0}$ is the augmentation
character. We introduce the notions of character Johnson amenability
and character Johnson contractibility for Banach algebras.
We show that 
$\ell^{1}(S)$ is pseudo-amenable if and only if $\ell^{1}(S)$  is
character Johnson-amenable, provided that $S$ is a uniformly locally finite band semigroup.
 We give some conditions whether $\phi$-biprojectivity
($\phi$-biflatness) of $\ell^{1}(S)$ implies the finiteness
(amenability) of $S$, respectively.
\end{abstract}
\section{Introduction}

 Helemskii studied Banach algebras via the Banach homology theory. In order to his investigation,
 he defined  biflat and biprojective Banach algebras. Indeed, $A$ is called biflat (biprojective), if
there exists a bounded $A$-bimodule morphism $\rho:A\rightarrow
(A\otimes_{p}A)^{**}$ ($\rho:A\rightarrow A\otimes_{p}A$) such that
$\pi^{**}\circ\rho$ is the canonical embedding of $A$ into $A^{**}$
($\rho$ is a right inverse for $\pi$), respectively, see \cite{run}.
He showed that $L^{1}(G)$ is a biflat Banach algebra if and only if
$G$ is amenable and also $L^{1}(G)$ is biprojective if and only if
$G$ is compact, see \cite{Hel}.

Recently, Kanuith  et al. in \cite{Kan} have been used this idea and
defined a new notion of amenability  for Banach algebra depended on
a character  of that Banach algebra. Indeed, for a character
$\phi\in\Delta(A)$, they defined the new notion of left
$\phi$-amenability, that is, $A$ is left $\phi$-amenable Banach
algebra if $\mathcal{H}^{1}(A,X^{*})=\{0\},$ for every Banach
$A$-bimodule $X$, provided that $a\cdot x=\phi(a)x,$ for all $a\in
A$ and $x\in X$. They also showed that the Fourier algebra $A(G)$
is $\phi$-amenable for each $\phi\in\Delta(A)$. Hu {\it et al.} in
\cite{Hu} used the idea of virtual diagonal of Banach algebras and
defined a parallel notion to left $\phi$-amenability and called it
left $\phi$-contractibility. This theory has been under more
investigations, Sangani Monfared in \cite{san} defined the concept of
character amenability which used
 every character of a Banach algebra to studying its properties. He showed that $L^{1}(G)$ is character amenable if and only if $G$ is amenable.  Recently Nasr-Isfahani {\it et al.} has been investigated
 the notions of left $\phi$-amenability and left $\phi$-contractibility in the Banach homology terms, see \cite{nas}.

Motivated by these considerations, in order to find biflatness and biprojectivity related to a character the  author  with A. Pourabbas defined the  notions of
$\phi$-biflatness, $\phi$-biprojectivity and $\phi$-Johnson
amenability for  Banach algebras, see \cite{11}. They showed that
for a locally compact group $G$, $L^{1}(G)$ is $\phi$-biflat if and
only if $G$ is amenable. Also they showed that the Fourier algebra
$A(G)$ is $\phi$-biprojective if and only if $G$ is discrete. For a
discrete group $G$, they showed that $\ell^{1}(G)$ is
$\phi$-biprojective if and only if $G$ is finite.

The content of this paper is as follows, after recalling some
definitions and background notations. We extend \cite[Lemma 4.2]{11} to Beurling algebras. We show that  $L^{1}(G,w)$ is
$\phi_{0}$-biprojective if and only if $G$ is compact, where
$\phi_{0}$ is the augmentation character.) We introduce character
Johnson amenability and character Johnson contractibility for Banach
algebras. We show that 
$\ell^{1}(S)$ is pseudo-amenable if and only if $\ell^{1}(S)$  is
character Johnson-amenable, provided that $S$ is a uniformly locally finite band semigroup.
 We give some conditions whether $\phi$-biprojectivity
($\phi$-biflatness) of $\ell^{1}(S)$ implies the finiteness
(amenability) of $S$, respectively. 
\section{Preliminaries}
We recall that if $X$ is a Banach $A$-bimodule, then with the
following actions $X^{*}$ is also a Banach $A$-bimodule
$$(a\cdot f)(x)=f(x\cdot a) ,\hspace{.25cm}(f\cdot a)(x)=f(a\cdot x )\qquad(a\in A,x\in X,f\in X^{*}). $$
Let $A$ and  $B$ be Banach algebras. The projective tensor product
of $A$ and $B$ is denoted by $A\otimes_{p} B$ and  with the following
multiplication is a Banach algebra
$$(a_{1}\otimes b_{1})(a_{2}\otimes b_{2})=a_{1}a_{2}\otimes b_{1}b_{2}\qquad (a_{1},a_{2}\in A,\quad b_{1},b_{2}\in B).$$ The Banach algebra
$A\otimes_{p}A$  with the following actions is a Banach $A$-bimodule
$$a\cdot(b\otimes c)=ab\otimes c,~~~(b\otimes c)\cdot a=b\otimes
ca~~~\qquad(a, b, c\in A).$$ Throughout,
the character space of $A$ is denoted by $\Delta(A)$. Let $\phi\in \Delta(A)$.
Then $\phi$ has a unique extension to $A^{**}$ denoted by
$\tilde{\phi}$ and defined by $\tilde{\phi}(F)=F(\phi)$ for every
$F\in A^{**}$. Clearly this extension remains to be a character on
$A^{**}$. We denote $\pi_{A}:A\otimes_{p}A\rightarrow A$ for the
product morphism which specified by $\pi_{A}(a\otimes b)=ab$.

Let $A$ be a Banach algebra and $X$ be a Banach $A$-bimodule.
The $n^{th}$ cohomology group of $A$
with coefficients in $X$ is denoted by $\mathcal{H}^{n}(A,X)$. In fact $A$ is an amenable Banach algebra,
if $\mathcal{H}^{1}(A,X^{*})=\{0\}$ for every Banach $A$-bimodule
$X$.

The Banach algebra $A$ is called $\phi$-biprojective
($\phi$-biflat), if there exists a bounded $A$-bimodule morphism
$\rho:A\rightarrow A\otimes_{p}A\,\,(\rho:A\rightarrow
(A\otimes_{p}A)^{**})$ such that
$$\phi\circ\pi_{A}\circ\rho(a)=\phi(a)\,\,(\tilde{\phi}\circ\pi^{**}_{A}\circ\rho(a)=\phi(a)),$$
 respectively for every $a\in A$.
A Banach algebra $A$ is called $\phi$-Johnson
amenable($\phi$-Johnson contractible) if there exists $m\in
(A\otimes_{p}A)^{**}$ ( $m\in A\otimes_{p}A$) such that $$a\cdot
m=m\cdot a,\quad
\tilde{\phi}\circ\pi^{**}_{A}(m)=1,(\phi\circ\pi_{A}(m)=1)\quad(a\in
A),$$ respectively for every $a\in A$. For more details, we refer
the readers to \cite{11}.

Let $G$ be a locally compact group. A continuous map $w:G\rightarrow
\mathbb{R^{+}}$ is called a weight function, if $w(e)=1$ and for
every $x$ and $y$ in $G$, $w(xy)\leq w(x)w(y)$ and $w(x)\geq1.$ The
Banach algebra of all measurable functions $f$ from $G$ into
$\mathbb{C}$ with $||f||_{w}=\int|f(x)|w(x)dx<\infty$ and the
convolution product is denoted by $L^{1}(G,w)$. The Banach algebra
of all complex-valued, regular and Borel measures $\mu$ on $G$ such
that $||\mu||_{w}=\int_{G}w(x)d|\mu|(x)<\infty$ is denoted by
$M(G,w)$. We write $M(G)$, whenever $w=1.$ The map
$\phi_{0}:L^{1}(G,w)\rightarrow \mathbb{C}$ which specified by
$$\phi_{0}(f)=\int_{G} f(x)dx$$ is called augmentation character,  for more details see
\cite{dale lau}.

We recall that $S$ is an inverse semigroup, if for each $s\in S$
there exists a unique element $s^{*}\in S$ such that $ss^{*}s=s$ and
$s^{*}ss^{*}=s^{*}$
\cite{how}.
The set of
idempotents of a semigroup $S$ is denoted by $E(S)$ . There exists a partial
order on $E(S)$, indeed
$$s\leq t\Longleftrightarrow s=st=ts\qquad( s,t\in E(S)).$$
If $S$ is an inverse semigroup, then there exists a partial order on
$S$ which is coincide with the partial order on $E(S)$. Indeed
$$s\leq t\Longleftrightarrow s=ss^{*}t\qquad (s,t\in
S).$$

 For the partially ordered  set $(S,\leq)$, we denote $(x]=\{y\in
S\,|\,y\leq x\}$. The set  $S$ is called locally finite (uniformly
locally finite) if for every   $x\in S$, we have  $|(x]|<\infty\,\,(\sup\{|(x]||x\in
S\}<\infty)$, respectively.
\section{$\phi$-biprojectivity of Beurling algebras}
Let $A$ be a Banach algebra and let $L$ be a closed ideal of $A$. We say that $L$ is
left essential as a  Banach $A$-bimodule, if $\overline{AL}=L.$

Let $A$ be a Banach algebra and $\phi\in \Delta(A)$. Suppose that
$L\subseteq\ker\phi$ is a closed ideal of $A$. Clearly $\phi$
induces a character $\overline{\phi}$ on $\frac{A}{L}$, which is
defined by $\overline{\phi}(x+L)=\phi(x)$ for every $x\in A$.
\begin{Proposition}
Let $A$ be a Banach algebra and $\phi\in \Delta(A)$. Suppose that
$A$ is a $\phi$-biprojective Banach algebra and $L\subseteq\ker\phi$
is a closed ideal of $A$ which is   left essential as a Banach
$A$-bimodule. Then $\frac{A}{L}$ is $\overline{\phi}$-biprojective.
\end{Proposition}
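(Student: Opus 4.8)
The plan is to push the bimodule morphism $\rho$ down to the quotient along the canonical epimorphism $q\colon A\to\frac{A}{L}$. Since $q$ is a contractive algebra homomorphism, it induces a bounded $A$-bimodule morphism $q\otimes q\colon A\otimes_{p}A\to\frac{A}{L}\otimes_{p}\frac{A}{L}$, where the target is regarded as an $A$-bimodule through $q$, and one checks directly from the definitions that $\pi_{\frac{A}{L}}\circ(q\otimes q)=q\circ\pi_{A}$. I would then set $\psi=(q\otimes q)\circ\rho\colon A\to\frac{A}{L}\otimes_{p}\frac{A}{L}$, a bounded $A$-bimodule morphism, and aim to factor $\psi$ through $q$, that is, to produce $\overline{\rho}$ with $\overline{\rho}\circ q=\psi$, and then verify that it witnesses the $\overline{\phi}$-biprojectivity of $\frac{A}{L}$.

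First I would establish the crucial vanishing $\psi|_{L}=0$, which is exactly what is needed for $\overline{\rho}$ to be well defined. The point is that, because $L$ is a (two-sided) closed ideal, the right action of $L$ on $\frac{A}{L}\otimes_{p}\frac{A}{L}$ is trivial: for $b,c\in A$ and $\ell\in L$ one has $(q(b)\otimes q(c))\cdot\ell=q(b)\otimes q(c\ell)=0$ since $c\ell\in L$. Consequently, using that $\psi$ is a right $A$-module morphism, $\psi(a\ell)=\psi(a)\cdot\ell=0$ for every $a\in A$ and $\ell\in L$. Here is where the essentiality enters: since $L=\overline{AL}$, every element of $L$ is a limit of finite sums $\sum_{i}a_{i}\ell_{i}$ with $a_{i}\in A$ and $\ell_{i}\in L$, so by linearity and continuity $\psi$ vanishes on all of $L$. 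This is the step I expect to be the main obstacle, and it is resolved precisely by combining the left-essential hypothesis with the triviality of the right $L$-action on the quotient tensor module; note that trying to use left linearity instead does not close the argument, so putting the $L$-factor on the right is essential.

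With $\psi|_{L}=0$ in hand, $\psi$ factors uniquely through the quotient: there is a bounded linear map $\overline{\rho}\colon\frac{A}{L}\to\frac{A}{L}\otimes_{p}\frac{A}{L}$ with $\overline{\rho}\circ q=\psi$ and $\norm{\overline{\rho}}\le\norm{\psi}\le\norm{\rho}$, the norm estimate coming from $q$ being a metric surjection. Because $q$ is a surjective homomorphism and both $\psi$ and $q$ are bimodule morphisms, $\overline{\rho}$ is automatically an $\frac{A}{L}$-bimodule morphism: for $a,b\in A$ one has $\overline{\rho}(q(a)q(b))=\psi(ab)=a\cdot\psi(b)=q(a)\cdot\overline{\rho}(q(b))$, and similarly on the right, which suffices since $q$ is onto.

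Finally I would verify the defining identity. Using $\pi_{\frac{A}{L}}\circ(q\otimes q)=q\circ\pi_{A}$ and $\overline{\phi}\circ q=\phi$, for every $a\in A$ we obtain $\overline{\phi}\circ\pi_{\frac{A}{L}}\circ\overline{\rho}(q(a))=\overline{\phi}\bigl(q(\pi_{A}\rho(a))\bigr)=\phi(\pi_{A}\rho(a))=\phi(a)=\overline{\phi}(q(a))$, where the penultimate equality is the $\phi$-biprojectivity of $A$. Since $q$ is surjective, this yields $\overline{\phi}\circ\pi_{\frac{A}{L}}\circ\overline{\rho}=\overline{\phi}$, so $\frac{A}{L}$ is $\overline{\phi}$-biprojective, as required.
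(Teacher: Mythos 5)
Your proof is correct and follows essentially the same route as the paper's: push $\rho$ down through the quotient map, use the triviality of the right $L$-action on the quotient tensor module together with left-essentiality to conclude that the composite annihilates $L$, and then factor through $q$ and verify the bimodule and character identities. The only cosmetic difference is that you apply $q\otimes q$ in one step where the paper quotients the two tensor factors separately (first $id\otimes q$, then $q\otimes id$); your handling of essentiality via limits of finite sums $\sum_{i}a_{i}\ell_{i}$ is in fact slightly more careful than the paper's shorthand $l=al'$.
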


\begin{proof}
Since $A$ is a $\phi$-biprojective Banach algebra, there exists a
bounded $A$-bimodule morphism $\rho:A\rightarrow A\otimes_{p}A$ such
that $\phi\circ\pi_{A}\circ\rho(a)=\phi(a)$ for every $a\in A.$ Let
$q:A\rightarrow \frac{A}{L}$ be the quotient map. Define
$\rho_{1}=id\otimes q\circ\rho:A\rightarrow
A\otimes_{p}\frac{A}{L}$. Since $L$ is an essential closed ideal of
$A$, for every $l\in L$, we have $$\rho_{1}(l)=id\otimes
q\circ\rho(l)=id\otimes q\circ\rho(al^{\prime})=id\otimes
q(\rho(a)\cdot l^{\prime})=0,$$ where $l=al^{\prime}$ for some $a\in
A$ and $l^{\prime}\in L$. Hence there exists an induced map (which
still denoted  by $\rho_{1}$) $\rho_{1}:\frac{A}{L}\rightarrow
A\otimes_{p}\frac{A}{L}.$

 Now define $\rho_{2}=q\otimes
id_{\frac{A}{L}}\circ\rho_{1}:\frac{A}{L}\rightarrow
\frac{A}{L}\otimes_{p}\frac{A}{L}.$ We will show that $\rho_{2}$ is
a bounded $\frac{A}{L}$-bimodule morphism and
$\overline{\phi}\circ\pi_{\frac{A}{L}}\circ\rho_{2}(x+L)=\overline{\phi}(x+L).$
Suppose that $x\in A$ and
$\rho(x)=\sum_{i=1}^{\infty}a_{i}^{x}\otimes b_{i}^{x}$ for some
sequences $(a_{i}^{x})_{i}$ and $(b_{i}^{x})_{i}$ in $A$. Then
$\rho_{2}(x+L)=\sum_{i=1}^{\infty}a_{i}^{x}+L\otimes b_{i}^{x}+L$,
so
$\pi_{\frac{A}{L}}\circ\rho_{2}(x+L)=\sum_{i=1}^{\infty}a_{i}^{x}b_{i}^{x}+L$,
therefore
$$\overline{\phi}(\sum_{i=1}^{\infty}a_{i}^{x}b_{i}^{x}+L)=\phi(\sum_{i=1}^{\infty}a_{i}^{x}b_{i}^{x})=\phi\circ\pi_{A}\circ\rho(x)=\phi(x)=\overline{\phi}(x+L).$$
Now suppose that $a+L$ is an arbitrary element of $\frac{A}{L}$. Then
$a+L\cdot\rho_{2}(x+L)=\sum_{i=1}^{\infty}aa_{i}^{x}+L\otimes
b_{i}^{x}+L$. Since $\rho$ is a left $A$-module
morphism, $\rho_{1}$ is a left $A$-module morphism. Hence
\begin{equation*}
\begin{split}
\rho_{2}(a x+L)=q\otimes id_{\frac{A}{L}}\circ \rho_{1}(a x+L)
&=q\otimes id_{\frac{A}{L}}(a\cdot \rho_{1}(x+L))\\
&=q\otimes id_{\frac{A}{L}}(\sum_{i=1}^{\infty}aa^{x}_{i}\otimes
b^{x}_{i}+L)\\
&=\sum_{i=1}^{\infty}aa^{x}_{i}+L\otimes b^{x}_{i}+L\\
&=a+L\cdot \sum_{i=1}^{\infty}a^{x}_{i}+L\otimes
b^{x}_{i}+L\\
&=a+L\cdot\rho_{2}(x+L).
\end{split}
\end{equation*}
Similarly one can show that $\rho_{2}$ is a right
$\frac{A}{L}$-module morphism and the proof is complete.
\end{proof}

We recall that $m\in A\otimes_{p}A $ is a $\phi$-Johnson contraction
for $A$, if $a\cdot m=m\cdot a$ and $\phi\circ\pi_{A}(m)=1$, where
$a\in A$, for more details the reader referred to \cite {11}.

Let $A$ be a Banach algebra and $\phi\in\Delta(A)$. $A$ is left
$\phi$-contractible if and only if there exists an element $m$ in
$A$ such that $am=\phi(a)m$ and $\phi(m)=1$,  see   \cite{Hu} and
\cite{nas}.
Note that the left  $\phi$-contractibility of a Banach algebra  $A$  is equivalent to property that;  the Banach algebra $\mathbb{C}$ is a  projective left Banach $A$-module
with the following left action,  $a\cdot z=\phi(a)z$ for every $a\in A$ and $z\in \mathbb{C}$
\cite[Theorem 4.3]{nas}.

Compare the following Theorem with \cite[Theorem 5.13]{Hel}.
\begin{Theorem}\label{main}
Let $G$ be a locally compact group, let $\omega$ be a weight on $G$ and let $\phi_{0}$ be the augmentation character on $L^{1}(G,w)$. Then the following
are equivalent
\begin{enumerate}
\item [(i)] $L^{1}(G,w)$ is $\phi_{0}$-biprojective;
\item [(ii)] $L^{1}(G,w)$ is left $\phi_{0}$-contractible;
\item [(iii)] $G$ is compact.
\end{enumerate}
\end{Theorem}
\begin{proof}
(i)$\Rightarrow$(ii) Set $A=L^{1}(G,w)$ and $L=\ker\phi_{0}$. Let
$A$ be $\phi_{0}$-biprojective. Since $A$ has a bounded approximate
identity, $L$ becomes a left essential  Banach $A$-bimodule. Thus by
the proof of previous Proposition there exists a bounded left
$A$-module morphism $$\rho_{1}:\frac{A}{L}\rightarrow
A\otimes_{p}\frac{A}{L}.$$ Since $\frac{A}{L}\cong\mathbb{C}$, hence
we have $\rho_{1}:\mathbb{C}\rightarrow A\otimes_{p}\mathbb{C}\cong
A$ such that
$\overline{\phi_{0}}\circ\pi_{A,\frac{A}{L}}\circ\rho_{1}(c)=
\overline{\phi_{0}}(c)$, where $c\in \mathbb{C}$. Set $m=\rho(1)\in
A$. Then
$\phi_{0}(m)=\phi_{0}(\rho(1))=\overline{\phi_{0}}\circ\pi_{A,\frac{A}{L}}\circ\rho(1)=1$
and $a\cdot\rho(1)=\rho(a\cdot1)=\phi_{0}(a)\rho(1)$, where $a\in
A$. Hence $A$ is left $\phi_{0}$-contractible.

~~(ii)$\Rightarrow$(iii) Suppose that $A$ is a left
$\phi_{0}$-contractible Banach algebra. Then there exists an element
$m\in A$  such that $am=\phi_{0}(a)m$ and $\phi_{0}(m)=1$, where
$a\in A$. Let $g\in G$ be an arbitrary element and $f\in A \setminus L$. Hence
$$\phi_{0}(f)\delta_{g}\ast m=\delta_{g}\ast(f\ast
m)=(\delta_{g}\ast f)\ast m=\phi_{0}(\delta_{g}\ast
f)m=\phi_{0}(f)m.$$ Hence $m$ is constant and belongs to $A$, which implies that $\int_{G} w(x)dx<\infty$. Therefore
$$|G|=\int_{G} w(e) dx<\infty ,$$
so $G$ is a compact group.

~~(iii)$\Rightarrow$(i) Let $G$ be a compact group  and consider a normalized left Haar measure.
Then $m=1\otimes 1$
in $A\otimes_{p}A$ satisfies $a\cdot m=m\cdot a=\phi_{0}(a)m$ and
$\phi_{0}\circ\pi_{A}(m)=1$,  where $a\in A$. Thus $A$ is  $\phi_{0}$-Johnson contractible.
Hence \cite[Lemma 3.2]{11} gives $\phi_0$-biprojectivity of $A$.
\end{proof}

It is easy to see that every biprojective Banach algebra $A$ is
$\phi$-biprojective for every $\phi\in\Delta(A)$, but the converse
is not always true. On the other hand \cite[Theorem 5.2.30]{run}
asserts that, if $A$ is biprojective, then for every Banach
$A$-bimodule $X,$ $\mathcal{H}^{n}(A, X)=0$, where $n\geq 3$. This
question maybe asked "what will happen, if $A$ is
$\phi$-biprojective?" at the following corollary we answer this
question  for the group algebras.
\begin{cor}
Let $G$ be a locally compact group.
\begin{enumerate}
\item [(i)] If $L^{1}(G)$ is  $\phi_{0}$-biprojective, then  for every Banach $L^{1}(G)$-bimodule $X,$ $\mathcal{H}^{n}(L^{1}(G), X)=0$, where $n\geq 3$.
\item [(ii)] $L^{1}(G)$ is $\phi_{0}$-biprojective if and only if $\mathcal{H}^{1}(L^{1}(G),X)=0$, for every Banach $L^{1}(G)$-bimodule  $X$ with $x\cdot a=\phi_{0}(a)x$ such that $a\in L^{1}(G)$ and $x\in X.$
\end{enumerate}
\end{cor}
\begin{proof} (i)
Let   $L^{1}(G)$  be  $\phi_{0}$-biprojective. Then  by Theorem \ref{main}
$G$ is compact and \cite{run} shows that $L^1(G)$ is biprojective for every compact group $G$.
Now using  \cite[Theorem 5.2.30]{run} one can get the results.

 (ii) holds by Theorem \ref{main}.
\end{proof}
For a Banach algebra $A$, $db A$ denoted for the minimum values of $n\in \mathbb{Z}^{+}$ such that $A^{\sharp}$ has a projective resolution of length $n$, see \cite[page 294]{dale auto}. Helemskii showed that for a biprojective Banach algebra $A$, $dbA\leq 2$, see \cite[Theorem 2.8.56]{dale auto}. Also it is well-known that $L^{1}(G)$ is biprojective if and only if $G$ is compact. Combine these facts and the previous Corollary  one can see that  if $L^{1}(G)$ is $\phi_{0}$-biprojective, then  $db L^{1}(G)\leq 2$.

\section{$\phi$-homological properties of semigroup algebras}
We remind that $S$ is a left (right) amenable semigroup if there exists an
element $m\in \ell^{1}(S)^{**}$ such that
$$s\cdot m=m\quad(m\cdot s=m),\qquad ||m||=m(\phi)=1\hspace{1cm}( s\in S),$$
where $\phi$ is the augmentation character of  $\ell^{1}(S)$, respectively. The semigroup $S$ is called amenable,  if it is both left and right amenable.

We recall that $S$ is a band semigroup, if $S=E(S)$. A band
semigroup $S$ is called rectangular band if $xyx=x$, for every
$x,y\in S.$ In this case there exists an equivalence relation on
$S$, in fact
$$a\mathcal{R}b\Longleftrightarrow S^{1}aS^{1}=S^{1}bS^{1},\hspace{1cm}(a,b\in S),$$
where $S^{1}=S\cup\set{1}$ \cite{how}.
Let $A$ be a Banach algebra and $\Lambda$ be a semilattice. Suppose that $\{A_{\lambda}:\lambda\in\Lambda\}$ is a collection of closed subalgebra of $A$. If $A$ is a $\ell^{1}$-direct sum of $A_{\lambda}$ as a Banach space and $A_{\lambda_{1}}A_{\lambda_{2}}\subseteq A_{\lambda_{1}\lambda_{2}}$, then $A$ is called $\ell^{1}$-graded of $A_{\lambda}$'s and denoted by $\oplus^{\ell^{1}}_{\lambda}A_{\lambda}.$

We say that $A$ is character-Johnson amenable (character-Johnson contracatible), if for every $\phi\in\Delta(A)$, $A$ is $\phi$-Johnson amenable ($\phi$-Johnson contractible), respectively.
\begin{Theorem}\label{band}
Suppose that $S$ is a band semigroup. Let $\ell^{1}(S)$ be character
Johnson-amenable. Then $S$ is a semilattice, so is amenable.
\end{Theorem}
\begin{proof}
Let $S$ be  a band semigroup. Then by \cite[Theorem 4.4.1]{how}
 $S=\cup_{\lambda\in \Lambda}S_{\lambda}$, where $S_{\lambda}$ is a  rectangular band semigroup for every  $\Lambda\in \Lambda$. Since
$S_{\lambda_{1}}S_{\lambda_{2}}\subseteq
S_{\lambda_{1}\lambda_{2}}$, we have
$\ell^{1}(S)=\oplus^{\ell^{1}}_{\lambda}\ell^{1}(S_{\lambda})$, here the index set
$\Lambda$ is a semilattice.

Set  $I=\oplus^{\ell^{1}}_{\lambda\leq
\lambda_{0}}\ell^{1}(S_{\lambda})$, where $\lambda_{0}\in\Lambda$ is fixed. One can easily see that $I$ is a
closed ideal of $\ell^{1}(S)$. Since $\ell^{1}(S_{\lambda_{0}})$ is
a homomorphic image of $I$.
For every $\phi\in\Delta(\ell^{1}(S_{\lambda_{0}}))$  we take
 $\phi\circ\eta$ as a character on $I$, which we denote it by
$\phi_{I}$, where  $\eta:I\rightarrow
\ell^{1}(S_{\lambda_{0}})$ is a homomorphism with a dense range.
 It is easy to see that $\phi_{I}$ can  be extend to
$\ell^{1}(S)$ which is denoted by $\phi_{S}$

Moreover,  there exists an isomorphism between $S_{\lambda_{0}}$ and
$L\times R$,  where $L$ and $R$ are denoted for a left-zero semigroup and a
right-zero semigroup, respectively  \cite[Theorem 1.1.3]{how}. Also  we have
$$\ell^{1}(S_{\lambda_{0}})\cong \ell^{1}(L\times R)\cong\ell^{1}(L)\otimes_{p}\ell^{1}(R).$$
Take $\phi=\phi_{0}\otimes\sigma_{0 }\in \Delta(\ell^{1}(S_{\lambda_{0}}))$, where $\phi_{0}$ and $\sigma_{0}$
are the augmentation characters on $\ell^{1}(L)$ and $\ell^{1}(R)$,
respectively. Consider  $\phi_{I}$ and $\phi_{S}$ corresponding to $\phi$ as before. Since $\ell^{1}(S)$ is character Johnson-amenable, by
\cite[Proposition 2.2]{11} $\ell^{1}(S)$ is left $\phi_{S}$-amenable
and right $\phi_{S}$-amenable. Since
$\phi_{S}|_{\ell^{1}(S_{\lambda_{0}})}\neq 0$, we have $\phi_{I}\neq
0$, so by \cite[Lemma 3.1]{Kan} $I$ is  left $\phi_{I}$-amenable and
right $\phi_{I}$-amenable.  But, since
$\ell^{1}(S_{\lambda_{0}})$ is a homomorphic image of $I$,  by
\cite[Proposition  3.5]{Kan} $\ell^{1}(S_{\lambda_{0}})$ is left
$\phi$-amenable and right $\phi$-amenable. Hence by \cite[Theorem
3.3]{Kan} $\ell^{1}(L)$ is left $\phi_{0}$-amenable and
$\ell^{1}(R)$ is right $\sigma_{0}$-amenable.  So  \cite[Theorem
 1.4]{Kan}  shows that there exists a net  $(m_{\alpha})_{\alpha}$ in
$\ell^{1}(L)$ such that
\begin{equation}\label{equ}
\begin{split}
am_{\alpha}-\phi_{0}(a)m_{\alpha}\xrightarrow{||\cdot||}0,\quad
\phi(m_{\alpha})=1.
\end{split}
\end{equation}
Replace $a_{1}=\delta_{s_{1}}$ and
$a_{2}=\delta_{s_{2}}$ in (\ref{equ}) instead of $a$, respectively for every $s_{1}, s_{2}\in L$.
 One can see that
$m_{\alpha}\rightarrow \delta_{s_{1}}$ and $m_{\alpha}\rightarrow
\delta_{s_{2}}$, which implies that $L$ and similarly $R$ are singleton, then $S_{\lambda_{0}}$ is singleton
and therefore  with the same argument we can show that $S_{\lambda}$ is singleton for every $\lambda\in \Lambda$.
Hence    $S=\cup_{\lambda\in \Lambda}S_{\lambda}$
is isomorphic to
$\Lambda$. Since every semilattice is commutative, $S$ is amenable and  the proof is complete.
\end{proof}
We recall that $A$ is a  pseudo-amenable Banach algebra, if there
exists a (not necessarily bounded) net $(m_{\alpha})_{\alpha}$ in
$A\otimes_{p}A$ such that $a\cdot m_{\alpha}-m_{\alpha}\cdot
a\xrightarrow{||\cdot||}0$ and
$\pi_{A}(m_{\alpha})a\xrightarrow{||\cdot||}a$, for every $a\in A$
\cite{ghah pse}.

Using \cite[Corollary 3.5]{rost1} and previous Theorem, we get the
following corollary.
\begin{cor}
Let $S$ be a uniformly locally finite band semigroup. Then
$\ell^{1}(S)$ is pseudo-amenable if and only if $\ell^{1}(S)$  is
character Johnson-amenable.
\end{cor}

Note that in the general case the pseudo-amenability is not  equivalent with the
character Johnson-amenability. To see this we give the following
example.

\begin{Example}
 Suppose that $G$ is a compact infinite group. Then by
\cite[Proposition 4.2]{ghah pse} $L^{1}(G)^{**}$ is not
pseudo-amenable. The set of all continuous
character  $\rho:G\rightarrow \mathbb{T}$ is denoted by $\widehat G$. It is well-known that every character
$\phi\in\Delta(L^{1}(G))$ is of the form
$$\phi_{\rho}(f)= \int_{G}\overline{\rho(x)}f(x)dx,$$ where $dx$ is
a left Haar measure on $G$, for more details, see \cite[Theorem
23.7]{20}. It is also well-known that $\phi_{\rho}$ has a unique
extension to ${L^{1}(G)}^{**}$, which denoted by
$\tilde{\phi}_{\rho}$. Hence $\Delta({L^{1}(G)}^{**})$ consists of
all $\tilde{\phi}_{\rho}$, for every $\rho\in\widehat G$. Since $G$ is compact,
$\widehat G\subset L^{\infty}(G)\subseteq L^{1}(G)$.  Define
$m_{\rho}=\rho\otimes\rho\in L^{1}(G)\otimes_{p}L^{1}(G)$. Since two
maps $a\mapsto a\rho$ and $a\mapsto \rho a$ are $w^{*}$-continuous
on ${L^{1}(G)}^{**}$ for every $a\in {L^{1}(G)}^{**}$, one can
easily see that $a\cdot m_{\rho}=m_{\rho}\cdot a$ and
$\tilde{\phi}_{\rho}\circ\pi_{{L^{1}(G)}^{**}}^{**}(m_{\rho})=1.$
Hence ${L^{1}(G)}^{**}$ is character Johnson-amenable.
\end{Example}
It is well-known that for an inverse semigroup $S$ there exists an
equivalence relation $\mathcal{R}$ on $S$, that is, for every $x,
y\in S$, $x\mathcal{R} y$ if and only if there exits $e\in E(S)$
such that $es=et$. Consider $G_{S}=\frac{S}{\mathcal{R}}$  \cite{mun}.
\begin{Proposition}
Let $S$ be an inverse semigroup. If $\ell^{1}(S)$ is character
Johnson-amenable, then $G_{S}$ is an amenable group.
\end{Proposition}
\begin{proof}
Since $G_{S}$ is a quotient of $S$, then $\ell^{1}(G_{S})$ is a
homomorphic image of $\ell^{1}(S)$. Suppose that
$\phi\in\Delta(\ell^{1}(G_{S}))$  and $p:\ell^{1}(S)\rightarrow
\ell^{1}(G_{S})$ is a dense range homomorphism. Since $\ell^{1}(S)$
is character Johnson amenable, $\ell^{1}(S)$ is $\phi\circ
p$-Johnson amenable. Now by  \cite[Proposition 2.2]{11}, $\ell^{1}(S)$ is left $\phi\circ
p$-amenable. Hence \cite[Proposition 3.5]{Kan} shows that $\ell^{1}(G)$ is left $\phi$-amenable.
Now by applying  \cite[Corollary 3.4]{alagh} $G_{S}$
must be amenable.
\end{proof}
Let $G$ be a group and $I$ be a non-empty set. Set
$\mathcal{M}^{0}(G,I)=\{(g)_{ij}|g\in G, i,j\in I\}\cup\{0\}$, where $(g)_{ij}$ is denoted for $I\times I$ matrix with
entry $g$ in $(i,j)^{th}$-position and zero elsewhere. With
the following multiplication $\mathcal{M}^{0}(G,I)$ is a semigroup
$$(g)_{ij}(h)_{kl}=\left \{\begin{array}{cc} (gh)_{il}&\mbox{if}\quad j=k\\
0&\mbox{if}\quad j\neq k\\
\end{array}
\right.$$
This semigroup is called  Brandt semigroup over $G$ with index set $I$. It is well-known that for $S=\mathcal{M}^{0}(G,I)$, $G_{S}=G$.
\begin{cor}
Let $G$ be a group and $I$ be a non-empty set and also let $S=\mathcal{M}^{0}(G,I)$. If $\ell^{1}(S)$ is character Johnson amenable, then $\ell^{1}(S)$ is pseudo-amenable.
\end{cor}
\begin{proof}
Let $\ell^{1}(S)$ be character Johnson amenable. By previous Proposition $G_{S}=G$ must be amenable. Now apply \cite[Corollary 3.8]{rost} to show that $\ell^{1}(S)$ is pseudo-amenable.
\end{proof}
\begin{Proposition}
Let $S$ be an inverse semigroup. If $\ell^{1}(S)$ is character
Johnson-contractible, then $G_{S}$ is finite.
\end{Proposition}
\begin{proof}
Use the  same argument  as in the proof of pervious Proposition and the
fact that $\ell^{1}(G_{S}) $ is left $\phi$-contractible if and only
if $G_{S}$ is finite, see \cite[Theorem 3.3]{alagh}.
\end{proof}
The results of
the previous  two  Propositions holds when we replace the hypothesis
``$A$ is left $\phi$-amenable ($\phi$-contractible)" instead  of
 ``$A$ is character Johnson amenable (character Johnson contractible)"
  respectively for every $\phi\in\Delta(A)$.
\begin{Proposition}
Let $S$ be a semigroup such that its center  $Z(S)$ is non-empty. If $\ell^{1}(S)$ is $\phi$-biflat, then $S$ is
amenable, where $\phi$ is the augmentation character on $\ell^{1}(S)$.
\end{Proposition}
\begin{proof}
Suppose that  $\ell^{1}(S)$
is $\phi$-biflat, where $\phi$ is the augmentation character on
$\ell^{1}(S)$.
 Let $\rho:\ell^{1}(S)\rightarrow
(\ell^{1}(S)\otimes_{p}\ell^{1}(S))^{**}$ be a bounded
$\ell^{1}(S)$-bimodule morphism such that
$\tilde{\phi}\circ\pi^{**}_{\ell^{1}(S)}\circ\rho(a)=\phi(a),$ for
every $a\in \ell^{1}(S)$. Set $m_{0}=\rho(\delta_{s_{0}})$, where $s_0\in Z(S)$,
 it is easy to see that $\delta_{s}\cdot m_{0}=m_{0}\cdot\delta_{s}$ and
$\tilde{\phi}\circ\pi^{**}_{\ell^{1}(S)}(m_{0})=1.$ Then
$\ell^{1}(S)$ is $\phi$-Johnson amenable. Applying the same
arguments as in the proof of \cite[Proposition 2.2]{11}, one can
show that $\delta_{s}\cdot m_{0}=m_{0}\cdot\delta_{s}=m_{0}$ and
$\tilde{\phi}\circ\pi^{**}_{\ell^{1}(S)}(m_{0})=1.$ Suppose that
$m=\pi^{**}_{\ell^{1}(S)}(m_{0})\in \ell^{1}(S)^{**}$. Hence we have
$\delta_{s} m=m \delta_{s}=m$ and $\tilde{\phi}(m)=1.$   Hence $S$
is an amenable semigroup, see \cite[Theorem 1.1]{Kan}.
\end{proof}
\begin{Proposition}\label{finite}
Let $S$ be a semigroup such that $Z(S)$ is non-empty. If
$\ell^{1}(S)$ is $\phi$-biprojective and $S$ has left or right
unit, then $S$ is finite,  where $\phi$ is the augmentation character on
$\ell^{1}(S)$.
\end{Proposition}
\begin{proof}
Suppose that $\ell^{1}(S)$ is  $\phi$-biprojective, where $\phi$ is the
augmentation character on $\ell^{1}(S)$. Then there exists a bounded $\ell^{1}(S)$-bimodule
morphism $\rho:\ell^{1}(S)\rightarrow
\ell^{1}(S)\otimes_{p}\ell^{1}(S)$ such that
$\phi\circ\pi_{\ell^{1}(S)}\circ\rho(a)=\phi(a),$ for
every $a\in \ell^{1}(S)$.

Define $m=\pi_{\ell^{1}(S)}\circ\rho(\delta_{s_{0}})$, where $s_{0}\in Z(S)$. Then we have
$\delta_{s} m=m\delta_{s}=m$ and $\phi(m)=1.$  Now if $e_{r}$ is a right unit for $S$, then
for every $s\in S$ we have
$$m(s)=m(se_{r})=\delta_{s}m(e_{r})=m(e_{r}),$$ that is,
 $m\in \ell^{1}(S)$ is a constant function on $S$ , so $S$ must be  finite.
\end{proof}
\begin{Remark}
There exists a biprojective semigroup algebra which is not character
Johnson amenable. To see this let $S$ be an infinite left zero
semigroup, that is, $st=s$ for every $s,t \in S$. It is easy to see
that $$fg=\phi_{S}(g)f,\quad f,g\in \ell^{1}(S),$$ where $\phi_{S}$
is the augmentation character on $\ell^{1}(S)$. Define
$\rho:\ell^{1}(S)\rightarrow \ell^{1}(S)\otimes_{p}\ell^{1}(S)$ by
$\rho(f)=f\otimes f_{0}$. It is easy to see that $\rho$ is a bounded
$A$-bimodule morphism which $\pi_{\ell^{1}(S)}\circ\rho(f)=f$ for
every $f\in \ell^{1}(S).$ It follows that $\ell^{1}(S)$ is
biprojective. Now using the same method as in the proof of
\ref{band} one can see that $\ell^{1}(S)$ is not character Johnson
amenable. Note that in the  previous Proposition
  the hypothesis ``$Z(S)\neq \emptyset$" is necessary. It is easy to see that for a left zero semigroup $S,$
  $Z(S)=\emptyset$. Also one can show that
for the augmentation character $\phi$, $\ell^{1}(S)$ is
$\phi$-biprojective,
 but $S$ is not finite.

Also note that  the hypothesis ``existence of left or right unit"
is necessary. To see this let $S=\mathbb{N}$ with the
product $m\cdot n=\min\{m,n\}\,\,(m,n\in S)$ which is an infinite semigroup with no unit such that $Z(S)=S$  \cite[Example 5.2]{11} and $\ell^{1}(S)$ is
$\phi$-biprojective, where $\phi$ is the augmentation
character.
\end{Remark}


\end{document}